\documentclass[12pt]{amsart}
\usepackage[utf8]{inputenc}
\usepackage[T1]{fontenc}
\usepackage[english]{babel}
\usepackage{amsmath,amssymb,amsthm}
\usepackage{xcolor}
\usepackage{hyperref}
\newtheorem{theorem}{Theorem}
\newtheorem{proposition}{Proposition}
\newtheorem{lemma}[proposition]{Lemma}
\newtheorem{corollary}[proposition]{Corollary}
\newtheorem{observacao}[proposition]{Remark}

\newcommand{\R}{\mathbb R}
\newcommand{\C}{\mathbb C}
\newcommand{\E}{E(\kappa,\tau)}
\newcommand{\QAR}{Q_{\mathrm{AR}}}
\newcommand{\calQAR}{\mathcal Q_{\mathrm{AR}}}
\newcommand{\Id}{\mathrm{Id}}

\title{Spheres under a Cauchy--Riemann inequality in $E(\kappa,\tau)$,\\ and non-CMC Hopf tori}
\author{Hil\'ario Alencar}
\address{Instituto de Matemática, Universidade Federal de Alagoas, 57072-900, Macei\'o, AL, Brazil}
\email{hilario@mat.ufal.br}
\author{Harold Rosenberg}
\email{rosenbergharold@gmail.com}
\date{\today}

\hypersetup{hidelinks,pdftitle={Spheres under a Cauchy-Riemann inequality in E(kappa,tau), and non-CMC Hopf tori},pdfauthor={Hilario Alencar}}

\begin{document}
\begin{abstract}
For a smooth, connected, oriented surface $\Sigma$, topologically a sphere, immersed in the homogeneous space $\E$ with $\tau\ne0$, we show that a one-sided Cauchy--Riemann-type inequality on the original Abresch--Rosenberg differential $\calQAR$ -- much weaker than requiring $\calQAR$ to be holomorphic -- already forces the mean curvature $H$ to be constant. The proof is uniform across every $E(\kappa,\tau)$, including the round sphere, and combines the Bers--Vekua similarity principle with the Poincar\'e--Hopf index formula for line fields; it is logically independent of the algebraic argument used, in the companion paper~\cite{AlencarRosenberg2026}, to characterize CMC immersions by holomorphy of $\calQAR$ alone. We also show that the topological hypothesis is sharp: on every Berger sphere, including the round one, there exist compact non-CMC tori -- the preimages under the Hopf-type submersion $\pi:\E\to M^2(\kappa)$ of simple closed curves in the base with nonconstant geodesic curvature -- satisfying the same inequality with a constant bound. As an elementary consequence of the classical Poincar\'e--Hopf theorem for line fields, we record that every closed surface on which $\det S<0$ everywhere -- a condition satisfied, in particular, by every Hopf tube, where $\det S\equiv-\tau^2$ -- is a torus.
\end{abstract}

\subjclass[2020]{Primary 53A10; Secondary 53C42, 58E20}
\keywords{Cauchy--Riemann inequality; constant mean curvature; Hopf tori; similarity principle; Poincar\'e--Hopf theorem}

\maketitle

\section{Introduction}

The classical Hopf differential associates to every smooth immersion $\psi:\Sigma\to Q^3(c)$, into a simply connected space form of constant curvature $c$, the $(2,0)$-component of the second fundamental form in a conformal parameter; Hopf showed it is holomorphic if and only if $H$ is constant, and that this forces every compact CMC sphere in $\R^3$ to be round \cite{Hopf1951}. For $\tau\ne0$, let $\E$ denote the simply connected homogeneous space fibering over the space form $M^2(\kappa)$ with geodesic fibers and vertical Killing field of parameter $\tau$ (\S\ref{sec:prelim} recalls the formalism in full), and let $\calQAR$ be the original Abresch--Rosenberg differential \cite{AR2004,AR2005}. In the companion paper \cite{AlencarRosenberg2026} we prove that, with no completeness or analyticity hypothesis, holomorphy of $\calQAR$ is \emph{equivalent} to $H$ being constant. Here we address a different, classical question in its own right: how much of that holomorphy hypothesis is actually needed for rigidity, once the surface is a topological sphere.

In the tradition of Hopf's original sphere theorem and its extension by Alencar--do Carmo--Tribuzy \cite[Theorem~1.1]{ACT2007} via a Cauchy--Riemann-type inequality, we show that a one-sided differential inequality already forces $H$ to be constant on every immersed topological sphere, in a single argument valid uniformly across $E(\kappa,\tau)$, including the round sphere itself. To avoid ambiguity between coefficient and tensor, we use the norm convention $|\calQAR|_g=|\QAR|/\lambda$ for the induced metric $g=\lambda|d\zeta|^2$.

\begin{theorem}[Spheres under a Cauchy--Riemann inequality]\label{thm:sphere}
Let $\Sigma$ be a smooth, connected, oriented surface, topologically $S^2$, and let $\psi:\Sigma\to\E$, with $\tau\ne0$, be an immersion satisfying
\[
 |dH|_g\le h\,|\calQAR|_g,\qquad h\in C^0(\Sigma,[0,\infty)).
\]
Then $\calQAR\equiv0$ and $H$ is constant.
\end{theorem}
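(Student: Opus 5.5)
Work in a local conformal parameter $\zeta$ with $g=\lambda|d\zeta|^2$, and write $\calQAR=\QAR\,d\zeta^2$ with
\[
\QAR=2(H+i\tau)\,\langle \sigma(\partial_\zeta,\partial_\zeta),N\rangle-\kappa-4\tau^2\over{}\ \ \text{(times)}\ T^2,
\]
the usual Abresch--Rosenberg coefficient, where $T=\langle \partial_\zeta,\xi\rangle$ and $\xi$ is the vertical Killing field. The first step is the Codazzi computation for $\E$, which gives an identity of the form
\[
\partial_{\bar\zeta}\QAR=\alpha\,\partial_\zeta H+\beta\,\QAR,
\]
with smooth local coefficients. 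Here $\alpha$ is an explicit combination of $\lambda$, $H+i\tau$ and $\bar T$, and $\beta$ involves $\partial_{\bar\zeta}H$ divided by $(H+i\tau)$. Because $\tau\neq0$, $H+i\tau$ never vanishes, so this division is legitimate. The identity is exactly the companion-paper computation behind ``holomorphy iff $H$ constant''; I take it as the structural input from \S\ref{sec:prelim}.

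Next I translate the hypothesis into coordinates. It reads $|\partial_\zeta H|\le C\,h\,|\QAR|/\lambda^{1/2}$, so $|\alpha\,\partial_\zeta H|\le c\,|\QAR|$ for some locally bounded function $c$. I then define $\gamma=\alpha\partial_\zeta H/\QAR$ where $\QAR\ne0$ and $\gamma=0$ elsewhere. This gives $|\partial_{\bar\zeta}\QAR|\le (c+|\beta|)\,|\QAR|$, a Carleman/Vekua inequality with an $L^\infty_{\mathrm{loc}}$ coefficient. The Bers--Vekua similarity principle then yields $\QAR=e^{s}F$ locally, with $s$ continuous and $F$ holomorphic. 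The conclusion is that either $\QAR\equiv0$ on the connected surface $\Sigma$, or its zeros are isolated and each has a positive integer order, exactly as for a holomorphic quadratic differential.

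The topological step, which I expect to be the main obstacle, rules out the second alternative. Suppose $\calQAR\not\equiv0$. On the complement of its finite zero set, $\calQAR$ defines a line field: the directions $v$ with $\calQAR(v,v)$ real and positive. Near a zero of order $k$, the factorization $e^sF$ with $F\sim \zeta^k$ and continuous $e^s$ shows the line field has index $-k/2\le -1/2$. The hard part is justifying that a continuous nonvanishing factor does not change the winding. It does not: $\arg e^{s}$ is single-valued and continuous on a small disc, so it contributes zero winding around the circle. Poincar\'e--Hopf for line fields then gives $\chi(S^2)=2=\sum(-k_j/2)\le0$, a contradiction. Hence $\calQAR\equiv0$.

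Finally, with $\QAR\equiv0$ the hypothesis gives $|dH|_g\le h\cdot0=0$, so $dH\equiv0$. Since $\Sigma$ is connected, $H$ is constant.
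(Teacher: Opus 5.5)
Your proposal follows the paper's proof essentially step for step: the fundamental identity \eqref{eq:holoidentity} becomes a Cauchy--Riemann inequality $|(\QAR)_{\bar\zeta}|\le a|\QAR|$, the Bers--Vekua similarity principle gives the dichotomy, and the index $-m_j/2$ line-field count contradicts $\chi(S^2)=2$. One small correction: the actual identity is $(\QAR)_{\bar\zeta}=2pH_{\bar\zeta}+\lambda(H+i\tau)H_\zeta$, which for $k\ne0$ is not literally of your form $\alpha H_\zeta+\beta\QAR$, because a term $kA^2H_{\bar\zeta}/(H+i\tau)$ survives. Since $H$ is real, $|H_{\bar\zeta}|=|H_\zeta|$, so your estimate still goes through. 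The paper avoids splitting off $\beta\QAR$ altogether by bounding $|2pH_{\bar\zeta}|\le 2|p|\,|H_\zeta|$ directly, so it never divides by $H+i\tau$ and never uses $\tau\ne0$.
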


\begin{observacao}
The conclusion includes the round limit $\kappa=4\tau^2$ ($k=0$). When $h\equiv0$, the hypothesis $|dH|_g\le0$ forces $dH\equiv0$ directly, so $H$ is constant and, by \eqref{eq:holoidentity}, $\calQAR$ is holomorphic as a consequence; more generally, on the class of topological spheres itself, the theorem just proved shows that the inequality forces $H$ constant and hence, again by \eqref{eq:holoidentity}, holomorphy of $\calQAR$, so on this class the inequality and holomorphy end up characterizing exactly the same immersions. The sense in which the inequality is a strictly weaker \emph{hypothesis} is pointwise and formal: unlike holomorphy, it permits $dH\ne0$ away from the zero set of $\calQAR$, and, as Proposition~\ref{prop:berger} below shows, it is genuinely satisfied by non-CMC (hence non-holomorphic) immersions once the topological restriction to $S^2$ is dropped. No topological sphere satisfying the inequality with $dH\ne0$ somewhere is exhibited or claimed here; on $S^2$, Theorem~\ref{thm:sphere} itself rules one out.
\end{observacao}

This inequality has a direct antecedent in Alencar--do Carmo--Fern\'andez--Tribuzy \cite{ACFT2007}, a sequel to \cite{ACT2007}, in the nondegenerate setting $\kappa\ne4\tau^2$: combined with the Abresch--Rosenberg classification, it concludes CMC and, in the cases treated there, rotational symmetry. We revisit this argument here through the similarity principle, in a single proof valid uniformly across $E(\kappa,\tau)$, including the degenerate case $\kappa=4\tau^2$, outside the scope of \cite{ACFT2007}. The conclusion drawn is only that $H$ is constant: embeddedness should not be inferred for arbitrary Berger spheres, where Torralbo \cite[Theorem~1]{Torralbo2010} constructed rotationally invariant CMC spheres that are immersed but not embedded. At $\tau=0$, in the products $M^2(c)\times\R$, the proof below recovers verbatim the original theorem of \cite[Theorem~1.1]{ACT2007} for the classical Abresch--Rosenberg differential; the hypothesis $\tau\ne0$ is kept in the statement only for uniformity with the local characterization theorem, which does genuinely require $\tau\ne0$.

Relative to \cite{ACT2007,ACFT2007}, the novelty of Theorem~\ref{thm:sphere} lies specifically in: (i) a single, uniform treatment across every $E(\kappa,\tau)$ that includes the degenerate case $\kappa=4\tau^2$, the round sphere, which is not covered by the classification-based approach of \cite{ACFT2007}; (ii) a proof built explicitly on the Bers--Vekua similarity principle applied directly to $\calQAR$, rather than on the Abresch--Rosenberg classification of CMC surfaces; (iii) logical independence from the local characterization theorem of \cite{AlencarRosenberg2026}: the two papers merely happen to share the fundamental identity \eqref{eq:holoidentity}, which is derived here from Daniel's structure equations with no reference to \cite{AlencarRosenberg2026}, rather than being inherited from it; and (iv) the systematic genus-one counterexamples of Proposition~\ref{prop:berger} below, absent from \cite{ACT2007,ACFT2007}, which show that the inequality already fails to characterize CMC once the topological hypothesis is relaxed from $S^2$ to the torus.

The topology of the sphere is decisive for this conclusion. In Proposition~\ref{prop:berger} below, non-CMC Hopf tori -- compact preimages of simple closed curves with nonconstant geodesic curvature under the submersion $\pi:\E\to M^2(\kappa)$, cf.\ \S\ref{sec:hopftubes} -- on every Berger sphere, including the round one, satisfy the same inequality even with $h$ constant. So the inequality, on its own, does not characterize CMC immersions in any class of closed surfaces broad enough to include every torus: the genus-zero result of Theorem~\ref{thm:sphere} shows it suffices on $S^2$, while the genus-one examples of Proposition~\ref{prop:berger} show it already fails, with room to spare, on the torus. This does not by itself settle the question separately for each genus $g\ge2$; see Remark~\ref{rem:euler} below.

Throughout, a result attributed to one of the works cited below -- Daniel's structure equations, the $L^p$ elliptic theory underlying the similarity principle, the Poincar\'e--Hopf index theorem and its extension to line fields, O'Neill's formulas for a Riemannian submersion, and the classification of closed surfaces -- is stated precisely at the point it is used, and not re-derived; every new result of this article, including the fundamental identity \eqref{eq:holoidentity} on which the proof of Theorem~\ref{thm:sphere} rests, is derived here in full, with no appeal to the companion paper \cite{AlencarRosenberg2026} beyond citing its statement for comparison. Section~\ref{sec:prelim} recalls Daniel's structure equations and derives \eqref{eq:holoidentity} from them. Section~\ref{sec:torus} records the topological fact $\det S<0\Rightarrow$ torus, with a complete proof, and derives the geometry of Hopf tubes, needed only for the sharpness example. Section~\ref{sec:proof} proves Theorem~\ref{thm:sphere}, via the Bers--Vekua similarity principle and the Poincar\'e--Hopf formula for line fields. Section~\ref{sec:sharpness} constructs the Berger-torus counterexamples.

\section{Preliminaries}\label{sec:prelim}

We recall here Daniel's structure equations for an immersion into $\E$, citing them directly from their primary source rather than from the companion paper \cite{AlencarRosenberg2026}; the fundamental identity \eqref{eq:holoidentity} needed below is then derived from these equations in full, so that this section, and the paper built on it, is self-contained. For $\tau\ne0$, $\E$ denotes the simply connected homogeneous space admitting a Riemannian submersion $\pi:\E\to M^2(\kappa)$ onto the simply connected space form of curvature $\kappa$, with geodesic fibers and unit vertical Killing field $\xi$ satisfying $\overline\nabla_X\xi=\tau X\times\xi$; up to isometry this is the only such space for each fixed pair $(\kappa,\tau)$ with $\tau\ne0$ \cite[\S2, pp.~90--94]{Daniel2007}. We write $k:=\kappa-4\tau^2$; the value $k=0$ (i.e., $\kappa=4\tau^2$) is the round model $S^3(\tau^2)$. We use $\E$ in this extended sense, including the constant-curvature case $\kappa=4\tau^2$; some sources reserve the notation $E(\kappa,\tau)$ for $\kappa\ne4\tau^2$ and treat the round sphere separately.

For a smooth oriented immersion $\psi:\Sigma\to\E$, fix the unit normal $N$, the shape operator $S=-\overline\nabla N$, the mean curvature $H=\tfrac12\operatorname{tr}S$, and $u=\langle N,\xi\rangle$. Denote by $J$ the positive rotation by $\pi/2$ in the tangent plane, so $JX=N\times X$, and by $T=\xi-uN$ the tangential part of $\xi$. In a conformal parameter $\zeta$, with $g=\lambda|d\zeta|^2$, let $p=\langle S\partial_\zeta,\partial_\zeta\rangle$ and $A=\langle\xi,\partial_\zeta\rangle$. In Daniel's conventions,\footnote{B.~Daniel, \emph{Isometric immersions into $3$-dimensional homogeneous manifolds}, Comment. Math. Helv. \textbf{82} (2007), Corollary~3.2 and Proposition~3.3, pp.~97--99 \cite{Daniel2007}; these follow from the curvature tensor of the Riemannian submersion $\pi:\E\to M^2(\kappa)$ via O'Neill's formulas for a submersion with totally geodesic fibers, see B.~O'Neill, \emph{Semi-Riemannian Geometry}, Academic Press (1983), Ch.~7 \cite{ONeill1983}.} the structure equations are
\begin{align}
 K&=\det S+\tau^2+ku^2,\label{eq:gauss}\\
 (\nabla_XS)Y-(\nabla_YS)X&=ku(\langle Y,T\rangle X-\langle X,T\rangle Y),\label{eq:codazzi}\\
 \nabla_XT&=u(SX-\tau JX),\label{eq:T}\\
 X(u)&=-\langle SX-\tau JX,T\rangle,\qquad |T|^2+u^2=1.\label{eq:vertical}
\end{align}
The original Abresch--Rosenberg differential is $\calQAR=\QAR\,d\zeta^2$,
\begin{equation}\label{eq:QAR}
 \QAR=2(H+i\tau)p-kA^2,
\end{equation}
introduced in \cite{AR2004,AR2005}. We now derive, from the structure equations \eqref{eq:gauss}--\eqref{eq:vertical} alone, the identity \eqref{eq:holoidentity} below on which the proof of Theorem~\ref{thm:sphere} rests. This is the only computation the present paper shares with the companion paper \cite{AlencarRosenberg2026}, and it is reproduced here in full so that this paper is self-contained; nothing beyond this point draws on \cite{AlencarRosenberg2026} in any way, and its local characterization theorem and algebraic machinery are never used below.

Extend $S$, $J$, and the Levi-Civita connection $\nabla$ of the induced metric $\C$-bilinearly to the complexified tangent bundle. Since $J$ is the rotation by $\pi/2$ compatible with the complex structure underlying the conformal parameter $\zeta$, $J\partial_\zeta=i\partial_\zeta$ and $J\partial_{\bar\zeta}=-i\partial_{\bar\zeta}$; and, for the conformal metric $g=\lambda|d\zeta|^2$, $\langle\partial_\zeta,\partial_\zeta\rangle=0$, $\langle\partial_\zeta,\partial_{\bar\zeta}\rangle=\lambda/2$, with the only nonvanishing covariant derivatives among $\partial_\zeta,\partial_{\bar\zeta}$ being
\begin{equation}\label{eq:mixedchristoffel}
 \nabla_{\partial_\zeta}\partial_\zeta=(\log\lambda)_\zeta\,\partial_\zeta,\qquad \nabla_{\partial_{\bar\zeta}}\partial_{\bar\zeta}=(\log\lambda)_{\bar\zeta}\,\partial_{\bar\zeta},\qquad \nabla_{\partial_\zeta}\partial_{\bar\zeta}=\nabla_{\partial_{\bar\zeta}}\partial_\zeta=0,
\end{equation}
the standard formulas for isothermal coordinates. Self-adjointness of $S$, extended bilinearly, together with $\langle S\partial_\zeta,\partial_{\bar\zeta}\rangle=\tfrac\lambda2H$ (immediate on expanding $\partial_\zeta,\partial_{\bar\zeta}$ in a local orthonormal frame and using $H=\tfrac12\operatorname{tr}S$), gives
\begin{equation}\label{eq:Sp1}
 \langle S\partial_{\bar\zeta},\partial_\zeta\rangle=\langle S\partial_\zeta,\partial_{\bar\zeta}\rangle=\frac\lambda2H.
\end{equation}

\emph{The equation for $A_{\bar\zeta}$.} The $\R$-linear identity \eqref{eq:T}, $\nabla_XT=u(SX-\tau JX)$, extends $\C$-bilinearly to $X=\partial_{\bar\zeta}$: $\nabla_{\partial_{\bar\zeta}}T=u(S\partial_{\bar\zeta}+i\tau\partial_{\bar\zeta})$. Since $T$ is tangent and $N\perp\partial_\zeta$, $A=\langle\xi,\partial_\zeta\rangle=\langle T,\partial_\zeta\rangle$; as $\nabla_{\partial_{\bar\zeta}}\partial_\zeta=0$ by \eqref{eq:mixedchristoffel},
\begin{align*}
 A_{\bar\zeta}&=\partial_{\bar\zeta}\langle T,\partial_\zeta\rangle=\langle\nabla_{\bar\zeta}T,\partial_\zeta\rangle+\langle T,\nabla_{\bar\zeta}\partial_\zeta\rangle\\
 &=\langle\nabla_{\bar\zeta}T,\partial_\zeta\rangle=u\bigl(\langle S\partial_{\bar\zeta},\partial_\zeta\rangle+i\tau\langle\partial_{\bar\zeta},\partial_\zeta\rangle\bigr),
\end{align*}
which, by \eqref{eq:Sp1}, gives
\begin{equation}\label{eq:fund1}
 A_{\bar\zeta}=\frac{u\lambda}2(H+i\tau).
\end{equation}

\emph{The equation for $p_{\bar\zeta}$.} The Codazzi identity \eqref{eq:codazzi}, extended $\C$-bilinearly to $(X,Y)=(\partial_\zeta,\partial_{\bar\zeta})$, reads, using $\langle\partial_\zeta,T\rangle=A$, $\langle\partial_{\bar\zeta},T\rangle=\overline A$,
\[
 (\nabla_\zeta S)\partial_{\bar\zeta}-(\nabla_{\bar\zeta}S)\partial_\zeta=ku(\overline A\,\partial_\zeta-A\,\partial_{\bar\zeta}).
\]
Pairing with $\partial_\zeta$: since $\langle\partial_\zeta,\partial_\zeta\rangle=0$ and $\langle\partial_{\bar\zeta},\partial_\zeta\rangle=\lambda/2$, the right side becomes $-ku\lambda A/2$. On the left, since $\nabla_{\bar\zeta}\partial_\zeta=0$, $(\nabla_{\bar\zeta}S)\partial_\zeta=\nabla_{\bar\zeta}(S\partial_\zeta)$, and
\[
 \langle(\nabla_{\bar\zeta}S)\partial_\zeta,\partial_\zeta\rangle=\langle\nabla_{\bar\zeta}(S\partial_\zeta),\partial_\zeta\rangle=\partial_{\bar\zeta}\langle S\partial_\zeta,\partial_\zeta\rangle-\langle S\partial_\zeta,\nabla_{\bar\zeta}\partial_\zeta\rangle=p_{\bar\zeta}.
\]
Likewise, since $\nabla_\zeta\partial_{\bar\zeta}=0$, $(\nabla_\zeta S)\partial_{\bar\zeta}=\nabla_\zeta(S\partial_{\bar\zeta})$, and, using this time the nonvanishing term $\nabla_\zeta\partial_\zeta=(\log\lambda)_\zeta\partial_\zeta$ of \eqref{eq:mixedchristoffel},
\begin{align*}
 \partial_\zeta\langle S\partial_{\bar\zeta},\partial_\zeta\rangle&=\langle\nabla_\zeta(S\partial_{\bar\zeta}),\partial_\zeta\rangle+\langle S\partial_{\bar\zeta},\nabla_\zeta\partial_\zeta\rangle\\
 &=\langle\nabla_\zeta(S\partial_{\bar\zeta}),\partial_\zeta\rangle+(\log\lambda)_\zeta\cdot\frac\lambda2H.
\end{align*}
Substituting $\langle S\partial_{\bar\zeta},\partial_\zeta\rangle=\lambda H/2$ from \eqref{eq:Sp1} on the left and expanding $\partial_\zeta(\lambda H/2)=\tfrac12\lambda_\zeta H+\tfrac12\lambda H_\zeta$, the term $\tfrac12\lambda_\zeta H$ cancels exactly against $(\log\lambda)_\zeta\cdot\tfrac\lambda2H=\tfrac12\lambda_\zeta H$, leaving $\langle\nabla_\zeta(S\partial_{\bar\zeta}),\partial_\zeta\rangle=\tfrac\lambda2H_\zeta$. Substituting both pairings into the displayed Codazzi identity,
\[
 \frac\lambda2H_\zeta-p_{\bar\zeta}=-\frac{ku\lambda A}2,
\]
i.e.
\begin{equation}\label{eq:fund2}
 p_{\bar\zeta}=\frac\lambda2\bigl(H_\zeta+kuA\bigr).
\end{equation}

\emph{The holomorphy identity.} Differentiating \eqref{eq:QAR} in $\bar\zeta$ ($\tau,k$ constant) and substituting \eqref{eq:fund1}--\eqref{eq:fund2},
\begin{align*}
 (\QAR)_{\bar\zeta}&=2H_{\bar\zeta}p+2(H+i\tau)p_{\bar\zeta}-2kAA_{\bar\zeta}\\
 &=2H_{\bar\zeta}p+\lambda(H+i\tau)H_\zeta+ku\lambda A(H+i\tau)-ku\lambda A(H+i\tau),
\end{align*}
using $2(H+i\tau)p_{\bar\zeta}=\lambda(H+i\tau)(H_\zeta+kuA)$ and $2kAA_{\bar\zeta}=ku\lambda A(H+i\tau)$; the two identical terms $ku\lambda A(H+i\tau)$ cancel exactly, leaving, for \emph{every} smooth immersion, with no hypothesis on $H$:
\begin{equation}\label{eq:holoidentity}
 (\QAR)_{\bar\zeta}=2pH_{\bar\zeta}+\lambda(H+i\tau)H_\zeta.
\end{equation}
In particular, holomorphy of $\calQAR$ is equivalent to $2pH_{\bar\zeta}+\lambda(H+i\tau)H_\zeta=0$, and $H$ constant forces this identically, matching the easy direction of the companion paper's local characterization theorem \cite{AlencarRosenberg2026}. All norms below are computed in the induced metric: $|dH|_g^2=\lambda^{-1}|dH|^2$ and $|\calQAR|_g=|\QAR|/\lambda$.

\section{A topological obstruction, and Hopf tubes}\label{sec:torus}

The following proposition uses none of the holomorphy machinery above. It is a classical-type consequence of the Poincar\'e--Hopf theorem for line fields -- included here, with a complete proof, purely for the reader's convenience and because it is what identifies the topological type of a Hopf tube below; the argument depends only on $\det S<0$, neither on the ambient space being $\E$ nor on $\det S$ being constant.

\begin{proposition}[A topological consequence of $\det S<0$]\label{prop:torus}
Let $\Sigma$ be a closed, connected, oriented surface, smoothly immersed with unit normal $N$ in an oriented Riemannian $3$-manifold, with shape operator $S=-\overline\nabla N$ satisfying $\det S<0$ at every point. Then $\Sigma$ is a torus.
\end{proposition}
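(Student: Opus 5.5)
Since $\det S<0$ everywhere, the shape operator $S$ (self-adjoint with respect to the induced metric) has two real eigenvalues of opposite signs at each point, $\kappa_1<0<\kappa_2$. In particular $\kappa_1\ne\kappa_2$ at every point, so $\Sigma$ has no umbilic points. The plan is to exploit this by building a line field with no singularities and then applying the Poincar\'e--Hopf formula for line fields.

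\emph{Step 1: a smooth line field.} At each point $x$, let $L_x\subset T_x\Sigma$ be the eigenline of $S$ for $\kappa_1$. Because the two eigenvalues are distinct everywhere, they are smooth functions; for instance
\[
\kappa_{1,2}=H\mp\sqrt{H^2-\det S},
\]
where $H^2-\det S>0$, so the square root is smooth. The eigenline $L$ is then the kernel of the smooth rank-one endomorphism $S-\kappa_1\Id$, and hence is a smooth line field on all of $\Sigma$ with no singular points.

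\emph{Step 2: Poincar\'e--Hopf for line fields.} This theorem (Hopf's version for fields with half-integer indices) states that on a closed surface the sum of the indices of a line field's isolated singularities equals $\chi(\Sigma)$. Our field has no singularities, so $\chi(\Sigma)=0$. If one prefers to avoid the line-field version, one can pass to the orientation double cover of $L$. This is a two-sheeted covering $\tilde\Sigma\to\Sigma$ on which $L$ lifts to a nowhere-vanishing vector field, so the classical Poincar\'e--Hopf theorem gives $\chi(\tilde\Sigma)=0$, and $\chi(\Sigma)=\tfrac12\chi(\tilde\Sigma)=0$. (The covering is either connected or trivial; in both cases $\chi(\tilde\Sigma)=2\chi(\Sigma)$.)

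\emph{Step 3: classification.} $\Sigma$ is closed, connected and oriented with $\chi(\Sigma)=0$, so by the classification of closed surfaces it has genus one, i.e.\ it is a torus.

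The only point needing care is the smoothness in Step 1. The eigenline is in general not globally orientable, which is exactly why the line-field version of Poincar\'e--Hopf, or the double cover, is needed. Nothing beyond $\det S<0$ is used, so the ambient manifold plays no role.
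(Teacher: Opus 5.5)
Your proposal is correct and follows essentially the same route as the paper: both build a smooth singularity-free principal line field from $\det S<0$ (you take the kernel of the constant-rank map $S-\kappa_1\Id$, the paper uses the spectral projector onto the other eigenline), then obtain $\chi(\Sigma)=0$ from Poincar\'e--Hopf via the orientation double cover, and conclude genus one from the classification of closed orientable surfaces. The differences (which eigenline, whether the double cover is used directly or one component is taken) are cosmetic.
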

\begin{proof}
At each point $q\in\Sigma$, $S_q$ is self-adjoint on a $2$-dimensional real inner product space, hence has two real eigenvalues $\mu_1(q)\le\mu_2(q)$, with $\mu_1\mu_2=\det S(q)<0$ by hypothesis: so $\mu_1(q)<0<\mu_2(q)$ everywhere, in particular always distinct, hence always simple.

\emph{Smoothness of the positive eigenline field.} With $H:=\tfrac12\operatorname{tr}S$, the quadratic formula gives $\mu_{1,2}=H\mp\sqrt{H^2-\det S}$ in any smooth local orthonormal frame; since $\det S<0\le H^2$, the discriminant $H^2-\det S>0$ never vanishes, so $\mu_1<\mu_2$ are both smooth on all of $\Sigma$, with $\mu_2-\mu_1=2\sqrt{H^2-\det S}>0$ everywhere. For a self-adjoint operator on a $2$-dimensional space with two distinct eigenvalues, the spectral projector onto the $\mu_2$-eigenspace is given by the elementary explicit formula $P=(S-\mu_1\Id)/(\mu_2-\mu_1)$, manifestly smooth since the denominator never vanishes; no perturbation theory beyond this finite-dimensional, two-eigenvalue computation is needed. Its image $\ell(q):=\operatorname{im}P_q=\ker(S_q-\mu_2(q)\Id)$ therefore varies smoothly with $q$, and $\ell$ is a smooth, nonsingular line subbundle of $T\Sigma$.

\emph{The orientation double cover.} Let $\widetilde\Sigma:=\{(q,v):q\in\Sigma,\,v\in\ell(q),\,|v|=1\}$, a smooth $2$-to-$1$ covering of $\Sigma$: locally, a smooth unit section $e$ of $\ell$ exhibits the two sheets as $v=\pm e(q)$, so $\widetilde\Sigma$ is a smooth manifold and the projection a genuine degree-$2$ covering. As $\Sigma$ is connected, $\widetilde\Sigma$ is either connected, or splits into two copies of $\Sigma$ (exactly when $\ell$ was already orientable); either way it carries the tautological global unit vector field $\widetilde e(q,v):=v$. Let $\widetilde\Sigma_0$ be $\widetilde\Sigma$ itself, if connected, or one of its two components otherwise: a closed, connected, oriented surface admitting a smooth nonvanishing tangent vector field, and a covering map onto $\Sigma$ of degree $2$ or $1$.

By the Poincar\'e--Hopf index theorem, see \cite[\S6]{Milnor1965}, $\chi(\widetilde\Sigma_0)=0$. Euler characteristic is multiplicative under finite covers of closed surfaces, so $\chi(\Sigma)=0$ whether the cover has degree $2$ or $1$. By the classification of closed, connected, orientable surfaces ($\chi=2-2g$; see \cite[Ch.~I]{doCarmo1976}), $\chi(\Sigma)=0$ forces $g=1$: $\Sigma$ is a torus.
\end{proof}

\subsection{Geometry and differential of Hopf tubes}\label{sec:hopftubes}

Given a regular curve $\gamma$ in the base $M^2(\kappa)$, possibly with self-intersections, the associated \emph{Hopf tube} is the immersed surface $\Sigma_\gamma:=\pi^{-1}(\gamma)\subset\E$, the full preimage of $\gamma$ under $\pi$. It is locally characterized by $u\equiv0$: since $\xi$ is tangent to each fiber and each fiber lies in $\Sigma_\gamma$, $u=\langle N,\xi\rangle=0$; conversely $u\equiv0$ means $\xi$ is everywhere tangent, and its integral curves (the fibers) locally foliate the surface. If the base is $S^2(\kappa)$ and $\gamma$ is a smooth simple closed curve, this pullback is an embedded compact surface, called a Hopf torus once Proposition~\ref{prop:torus} identifies its topological type; see also \cite{AS2022} for further developments of this construction.

\begin{proposition}[Hopf's formula]\label{prop:hopf}
On a Hopf tube, the field $T=\xi$ is unit and parallel, the metric is flat, and, in the parallel frame $(E_1,E_2)=(-JT,T)$,
\[
 S=\begin{pmatrix}2H&\tau\\\tau&0\end{pmatrix},\qquad T(H)=0,\qquad \det S=-\tau^2.
\]
In the local coordinates $\partial_x=E_1$, $\partial_y=E_2$, and $\zeta=x+iy$, the original differential has coefficient
\[
 \QAR=H^2+\frac\kappa4.
\]
In particular, on a connected Hopf tube its holomorphy is equivalent to $H$ constant. If $k_g$ is the oriented geodesic curvature of the base curve, with a compatible normal, then $2H=k_g$.
\end{proposition}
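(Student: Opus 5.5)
The plan is to use only that a Hopf tube is characterized by $u\equiv0$, and to read everything off the structure equations \eqref{eq:gauss}--\eqref{eq:vertical}. First, $u\equiv0$ and $|T|^2+u^2=1$ give $|T|=1$ and $T=\xi$. Then \eqref{eq:T} gives $\nabla_XT=u(SX-\tau JX)=0$, so $T$ is parallel. Since $J$ is parallel on an oriented surface, $E_1=-JT$ is parallel too, and $(E_1,E_2)=(-JT,T)$ is a global-on-charts parallel orthonormal frame. Hence the metric is flat; this is consistent with \eqref{eq:gauss} once $\det S=-\tau^2$ is known. Because the frame is parallel and orthonormal, $[E_1,E_2]=0$. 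So there are local coordinates with $\partial_x=E_1$, $\partial_y=E_2$, and $\zeta=x+iy$ is a conformal parameter with $\lambda=1$.

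Next I compute $S$. Differentiating $u\equiv0$ in \eqref{eq:vertical} gives $\langle SX-\tau JX,T\rangle=0$ for all $X$. By self-adjointness of $S$ and $\langle JX,T\rangle=-\langle X,JT\rangle$, this says $ST=-\tau JT=\tau E_1$. So $S_{12}=\tau$ and $S_{22}=0$, and $\operatorname{tr}S=2H$ forces $S_{11}=2H$. This yields the matrix in the statement and $\det S=-\tau^2$. For $T(H)=0$, I use \eqref{eq:codazzi} with $u=0$, which makes $S$ a Codazzi tensor. In the parallel frame, $(\nabla_XS)$ is just the entrywise derivative of the matrix. The $E_1$-component of $(\nabla_{E_1}S)E_2=(\nabla_{E_2}S)E_1$ gives $E_1(\tau)=E_2(2H)$, hence $T(H)=0$.

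For the differential, I write $\partial_\zeta=\tfrac12(E_1-iE_2)$. This gives
\[
 p=\tfrac14\bigl(S_{11}-S_{22}-2iS_{12}\bigr)=\tfrac12(H-i\tau),\qquad A=\langle T,\partial_\zeta\rangle=-\tfrac i2,\quad A^2=-\tfrac14.
\]
Substituting into \eqref{eq:QAR},
\[
 \QAR=(H+i\tau)(H-i\tau)+\tfrac k4=H^2+\tau^2+\tfrac{\kappa-4\tau^2}4=H^2+\tfrac\kappa4.
\]
Since $H$ depends only on $x$, we get $(\QAR)_{\bar\zeta}=\tfrac12(\partial_x+i\partial_y)(H^2)=HH_x$. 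Holomorphy is then equivalent to $(H^2)_x=0$, i.e.\ to $H^2$ being locally constant. On a connected tube, $H^2$ is then constant, and a continuous function with constant square on a connected set is constant, so $H$ is constant. The same conclusion also follows directly from \eqref{eq:holoidentity}.

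Finally, for $2H=k_g$, I compute $S_{11}=\langle\overline\nabla_{E_1}E_1,N\rangle$. Here $E_1$ and $N$ are both horizontal, with $d\pi(E_1)=\gamma'$ (unit) and $d\pi(N)$ the chosen unit normal of $\gamma$. By O'Neill's formula, the horizontal part of $\overline\nabla_{E_1}E_1$ is the horizontal lift of $\nabla^{M}_{\gamma'}\gamma'$; the vertical part $\tfrac12[E_1,E_1]^v$ vanishes. Hence $S_{11}=k_g$, i.e.\ $2H=k_g$. I expect the main obstacle to be bookkeeping of signs and orientations: the sign in $ST=-\tau JT$, the choice $E_1=-JT$, and matching the orientation of $N$ with the normal of $\gamma$. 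I would fix these by checking them against $J\partial_\zeta=i\partial_\zeta$ and the convention $\overline\nabla_X\xi=\tau X\times\xi$.
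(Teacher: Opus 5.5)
Your proposal is correct and follows the paper's proof essentially step for step. The steps match: $u\equiv0$ yields a parallel frame with $\lambda\equiv1$, the matrix of $S$ is read off from \eqref{eq:vertical}, Codazzi with $u=0$ gives $T(H)=0$, $p=\tfrac12(H-i\tau)$ and $A=-\tfrac i2$ are evaluated directly, and O'Neill's formula gives $S_{11}=k_g$. The only small deviation is the holomorphy step, where you compute $(\QAR)_{\bar\zeta}=HH_x$ directly from $T(H)=0$, whereas the paper applies the open mapping theorem to the real-valued holomorphic function $\QAR$; both arguments are valid.
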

\begin{proof}
\emph{$T$ is unit and parallel.} On $\Sigma_\gamma$, $u\equiv0$, so $T=\xi-uN=\xi$: unit, and tangent as noted. By \eqref{eq:T}, $\nabla_XT=u(SX-\tau JX)$; since $u\equiv0$, $\nabla_XT\equiv0$: $T$ is parallel.

\emph{Flatness, and the parallel frame.} Since $J$ is parallel on any oriented Riemannian surface, $E_1:=-JT$ satisfies $\nabla_XE_1=-J\nabla_XT=0$: $E_1$ is parallel as well, and $(E_1,E_2)=(-JT,T)$ is a global parallel orthonormal frame. A surface with a global parallel orthonormal frame is flat: $\langle R(E_1,E_2)E_2,E_1\rangle$ vanishes term by term (each $\nabla E_i\equiv0$, and $[E_1,E_2]=0$ by torsion-freeness), so $K\equiv0$. Being parallel and commuting, $E_1,E_2$ are the coordinate vector fields of a local system $(x,y)$ in which the metric is $dx^2+dy^2$: $\lambda\equiv1$.

\emph{The matrix of $S$.} By \eqref{eq:vertical}, $X(u)=-\langle SX-\tau JX,T\rangle$; since $u\equiv0$, $\langle SX,T\rangle=\tau\langle JX,T\rangle$ for every tangent $X$. With $X=E_2=T$: $\langle SE_2,E_2\rangle=\tau\langle JE_2,E_2\rangle=0$ (as $JE_2=-E_1$), so $S_{22}=0$. With $X=E_1$: $\langle SE_1,E_2\rangle=\tau\langle JE_1,E_2\rangle=\tau$, so $S_{12}=S_{21}=\tau$. Finally $S_{11}=2H-S_{22}=2H$, and $\det S=0-\tau^2=-\tau^2$.

\emph{$T(H)=0$.} By \eqref{eq:codazzi}, $\nabla S$ is symmetric since $u\equiv0$: $(\nabla_XS)Y=(\nabla_YS)X$. With $(X,Y)=(E_1,E_2)$: $(\nabla_{E_1}S)E_2=\tau\nabla_{E_1}E_1=0$, and $(\nabla_{E_2}S)E_1=2(E_2H)E_1+2H\nabla_{E_2}E_1+\tau\nabla_{E_2}E_2=2(E_2H)E_1$. Equating gives $E_2H=T(H)=0$.

\emph{The formula for $Q_{\mathrm{AR}}$.} With $\lambda=1$, $\partial_\zeta=\tfrac12(E_1-iE_2)$: $S\partial_\zeta=\tfrac12\bigl((2H-i\tau)E_1+\tau E_2\bigr)$, so
\[
 p=\langle S\partial_\zeta,\partial_\zeta\rangle=\frac{H-i\tau}2,\qquad A=\langle\xi,\partial_\zeta\rangle=-\frac i2.
\]
Substituting into $Q_{\mathrm{AR}}=2(H+i\tau)p-kA^2$,
\[
 Q_{\mathrm{AR}}=(H+i\tau)(H-i\tau)+\frac k4=H^2+\tau^2+\frac{\kappa-4\tau^2}4=H^2+\frac\kappa4.
\]

\emph{Holomorphy $\iff$ $H$ constant.} $Q_{\mathrm{AR}}=H^2+\kappa/4$ is everywhere real. If $\calQAR$ is holomorphic, then $Q_{\mathrm{AR}}$ is a holomorphic function taking only real values on the connected surface $\Sigma_\gamma$; since a nonconstant holomorphic function is an open map and $\R$ has empty interior in $\C$, $Q_{\mathrm{AR}}$ must be constant, say $Q_{\mathrm{AR}}\equiv C$. Consequently $H^2=C-\kappa/4$ is constant, and since $H$ is continuous on the connected surface $\Sigma_\gamma$, $H$ itself is constant. The converse is immediate.

\emph{$2H=k_g$.} Parametrize $\gamma$ by arc length with unit tangent $V=\gamma'$ and compatible normal $\widetilde N$, so $\nabla^M_VV=k_g\widetilde N$. Since $E_1=-J\xi$ is horizontal and unit, $d\pi(E_1)=V$ along the horizontal lift $\widetilde\gamma$, after fixing the identification of $E_1$'s integral curve with $\gamma$; and $d\pi(N)=\widetilde N$ for a suitable choice of sign of $N$. By the Gauss formula, $S_{11}=\langle\overline\nabla_{\widetilde\gamma'}\widetilde\gamma',N\rangle$. Since $\widetilde\gamma'=E_1$ is horizontal, O'Neill's decomposition of the ambient covariant derivative of a horizontal curve (see \cite[Ch.~7]{ONeill1983}) gives $\overline\nabla_{\widetilde\gamma'}\widetilde\gamma'=(\text{horizontal lift of }\nabla^M_VV)+A_{E_1}E_1$, the second term vertical; pairing with the horizontal $N$ kills it, leaving $\langle\overline\nabla_{\widetilde\gamma'}\widetilde\gamma',N\rangle=\langle\nabla^M_VV,d\pi(N)\rangle=k_g$. Hence $S_{11}=k_g$, and $S_{11}=2H$, so $2H=k_g$.
\end{proof}

By the matrix formula above, $\det S\equiv-\tau^2<0$ identically on every Hopf tube (as $\tau\ne0$), so the hypothesis of Proposition~\ref{prop:torus} holds and it applies: every closed Hopf tube is topologically a torus, exactly as its name suggests, and this identification is purely topological, established independently of the formula for $Q_{\mathrm{AR}}$ just derived. The elementary, closed-form nature of that formula makes Hopf tori the natural place to look for non-CMC examples satisfying the weaker inequality of Theorem~\ref{thm:sphere}, which is exactly the mechanism behind Proposition~\ref{prop:berger} in \S\ref{sec:sharpness} below.

\section{Proof of the main theorem}\label{sec:proof}

In this section $\calQAR$ is not assumed holomorphic: we work under the strictly weaker Cauchy--Riemann inequality of Theorem~\ref{thm:sphere}, with no appeal to the local characterization theorem of \cite{AlencarRosenberg2026} or to any part of its algebraic machinery. All norms are computed in the induced metric $g=\lambda|d\zeta|^2$, with $|\calQAR|_g=|\QAR|/\lambda$.

\begin{lemma}[Local similarity principle]\label{lem:similarity}
If $f\in C^1(U,\C)$ satisfies $|\partial_{\bar\zeta}f|\le a|f|$, with $a$ locally bounded, then, near each point, $f=\Phi F$, where $F$ is holomorphic and $\Phi$ is continuous and nonvanishing. Thus, on each connected component of $U$, a solution that is not identically zero there has isolated zeros, of positive integer order and with the same winding number as the holomorphic factor.
\end{lemma}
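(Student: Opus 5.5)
The plan is the classical Bers--Vekua reduction: turn the differential inequality into a linear $\bar\partial$-equation with a bounded coefficient, and then remove that coefficient with an integrating factor built from the Cauchy transform. Work in a small disc $D\Subset U$ around the given point. Define the measurable function
\[
 b:=\begin{cases}\partial_{\bar\zeta}f/f,& f\ne0,\\ 0,& f=0.\end{cases}
\]
By hypothesis $|b|\le a$, which is bounded on $D$, so $b\in L^\infty(D)$. Moreover $f_{\bar\zeta}=bf$ holds almost everywhere. At points where $f=0$ the inequality gives $|f_{\bar\zeta}|\le a\cdot 0=0$, so both sides vanish there.

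Next set $s:=T_D b$, where
\[
 T_D b(\zeta)=-\frac1\pi\int_D\frac{b(w)}{w-\zeta}\,dA(w)
\]
is the Cauchy transform. For $b\in L^\infty(D)\subset L^p(D)$ with $p>2$, the standard $L^p$ theory (Vekua) gives $s\in W^{1,p}(D)\subset C^{0,1-2/p}(\overline D)$, with $\partial_{\bar\zeta}s=b$ in the distributional sense. Put $\Phi:=e^{s}$, which is continuous and nonvanishing, and $F:=e^{-s}f$. Then $F\in W^{1,p}_{\mathrm{loc}}$, and
\[
 \partial_{\bar\zeta}F=e^{-s}\bigl(f_{\bar\zeta}-bf\bigr)=0
\]
in the weak sense. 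By Weyl's lemma, which says a weak solution of $\bar\partial F=0$ is holomorphic, $F$ is holomorphic on $D$, and $f=\Phi F$. The main technical point is justifying this product rule for a $W^{1,p}$ function times a $C^1$ function. It holds because $W^{1,p}$ with $p>2$ is an algebra under multiplication by $C^1$ functions and the chain rule applies to $e^{-s}$; this is the step I would state carefully, citing Vekua/Bers for the $L^p$ properties of $T_D$.

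For the global statement, fix a connected component $U_0$ of $U$ and let $Z\subset U_0$ be the set of points having a neighbourhood on which $f\equiv0$. This set is open by definition. It is also closed in $U_0$. Indeed, if $q\in\overline Z$, write $f=\Phi F$ near $q$. Then $F$ vanishes on a nonempty open subset of a connected disc, so $F\equiv0$ on the whole disc by the identity theorem, and hence $f\equiv0$ near $q$. By connectedness, either $f\equiv0$ on $U_0$, or $Z=\emptyset$.

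In the second case $F\not\equiv0$ on each local disc. Its zeros, which coincide with those of $f$ since $\Phi\ne0$, are isolated and have positive integer orders $m$. On a small circle $\partial B_r(q)$ around a zero $q$, the winding number of $f$ equals
\[
 \operatorname{wind}(\Phi|_{\partial B_r})+\operatorname{wind}(F|_{\partial B_r})=0+m.
\]
The first term vanishes because $\Phi$ is continuous and nonvanishing on the whole disc $B_r(q)$, so $\Phi|_{\partial B_r}$ is null-homotopic in $\C^*$. Thus $f$ has the same winding number as its holomorphic factor, which proves the final assertion.
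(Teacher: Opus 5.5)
Your proposal is correct and follows essentially the same route as the paper's proof. It uses the same bounded coefficient $b=f_{\bar\zeta}/f$ (set to $0$ on the zero set), the Cauchy transform as integrating factor with $\Phi=e^{s}$, $F=e^{-s}f$ and Weyl's lemma, then the identity theorem plus connectedness and the winding-number splitting. The one cosmetic difference is that you phrase the global step as a clopen-set argument on $U_0$, whereas the paper patches local factorizations along a chain of discs; the two amount to the same thing.
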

\begin{proof}
Fix $\zeta_0\in U$ and a disc $D=D(\zeta_0,r)\Subset U$. Define
\[
 b(\zeta):=\begin{cases}\dfrac{f_{\bar\zeta}(\zeta)}{f(\zeta)},&f(\zeta)\ne0,\\[4pt]0,&f(\zeta)=0.\end{cases}
\]
By hypothesis, $|f_{\bar\zeta}(\zeta)|\le a(\zeta)|f(\zeta)|$ everywhere, so $|b(\zeta)|\le a(\zeta)$ wherever $f(\zeta)\ne0$; at a point where $f(\zeta)=0$ the hypothesis gives $f_{\bar\zeta}(\zeta)=0=b(\zeta)f(\zeta)$ as well. Hence $f_{\bar\zeta}=bf$ identically on $D$, with $b\in L^\infty(D)$, $\|b\|_{L^\infty(D)}\le\sup_Da<\infty$.

Since a bounded function on a bounded domain lies in $L^p(D)$ for every $1<p<\infty$, the inhomogeneous Cauchy--Riemann equation $v_{\bar\zeta}=b$ admits, on any slightly smaller disc $D'\Subset D$, a solution $v\in W^{1,p}(D')$ for every $1<p<\infty$: the standard $L^p$ solvability theory for $\bar\partial$. Explicitly, the Cauchy transform
\[
 v(\zeta):=\frac1\pi\int_D\frac{b(\zeta')}{\zeta-\zeta'}\,dA(\zeta')
\]
satisfies $v_{\bar\zeta}=b$ distributionally. Two different kernels are at play here, and it is worth separating their roles: the kernel $1/\zeta$ defining $v$ itself is only weakly singular (locally integrable in the plane), which already gives $v$ as a well-defined function; control of its first derivatives is a separate matter, governed instead by the genuinely singular kernel $\partial_\zeta(1/\zeta)\sim1/\zeta^2$ (understood in the principal-value sense) that appears upon differentiating under the integral. It is to this second, Calder\'on--Zygmund kernel that the classical $L^p$ estimates apply, bounding both first derivatives of $v$ in $L^p_{\mathrm{loc}}$ for every $1<p<\infty$ once $b\in L^\infty(D)\subset L^p(D)$; see \cite[Ch.~9]{GilbargTrudinger} for the underlying $L^p$ elliptic theory, and \cite{Bers1956,Vekua1962} for the classical similarity principle. Fixing $p>2$, the Sobolev (Morrey) embedding $W^{1,p}(D')\hookrightarrow C^{0}(\overline{D'})$ (valid in the plane since $p>2=\dim$; see \cite[Ch.~7]{GilbargTrudinger}) gives $v$ a continuous representative on $D'$, used from now on.

Set $F:=e^{-v}f$ on $D'$. Since $v\in W^{1,p}(D')$ with $p>2$, the Sobolev chain rule gives $e^{-v}\in W^{1,p}(D')$ with $\partial_{\bar\zeta}(e^{-v})=-e^{-v}\partial_{\bar\zeta}v$ almost everywhere; combined with $f\in C^1$, the distributional product rule for $\bar\partial$ applies, and
\[
 F_{\bar\zeta}=e^{-v}f_{\bar\zeta}-v_{\bar\zeta}e^{-v}f=e^{-v}(bf)-b\,e^{-v}f=0
\]
identically on $D'$. By Weyl's lemma for $\partial_{\bar\zeta}$, $F$ -- already continuous -- is holomorphic on $D'$. Set $\Phi:=e^v$: continuous and nowhere zero. Then $f=\Phi F$ on $D'$, the local factorization claimed near the arbitrary point $\zeta_0\in U$.

For the last statement, fix a connected component $U_0$ of $U$ and suppose $f|_{U_0}\not\equiv0$. If $f$ vanished on some nonempty open $V\subset U_0$, patching local factorizations along a chain of discs from $V$ to any point of $U_0$ (using the identity theorem for each local holomorphic factor $F$) would force $f\equiv0$ on all of $U_0$, a contradiction. Hence $f$ vanishes on no nonempty open subset of $U_0$, and on each local disc $D'$ the holomorphic factor $F$ is nonzero, with isolated zeros of some positive integer order $m$: near such a zero $\zeta_1$, $F(\zeta)=(\zeta-\zeta_1)^mG(\zeta)$, $G(\zeta_1)\ne0$. Tracking a continuous branch of $\arg F=m\arg(\zeta-\zeta_1)+\arg G$ around a small circle about $\zeta_1$, the $G$-term contributes no net change (continuous nonvanishing $G$ on a simply connected disc), and likewise for $\arg\Phi$; so the winding number of $f=\Phi F$ around $\zeta_1$ equals that of $F$ alone, namely $m$.
\end{proof}

\begin{corollary}[Global dichotomy]\label{cor:dichotomy}
Let $\Sigma$ be connected and let $Q$ be the coefficient, in local conformal parameters, of a $C^1$ quadratic differential with $|Q_{\bar\zeta}|\le a|Q|$ on each chart, $a$ continuous. Then either $Q\equiv0$ on $\Sigma$, or the zero set of $Q$ is discrete, with every zero of positive integer order and winding number equal to that order.
\end{corollary}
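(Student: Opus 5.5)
The plan is to globalize Lemma~\ref{lem:similarity} by a connectedness argument on $\Sigma$, after checking that both the hypothesis and the conclusion are independent of the chart. First, under a holomorphic change of conformal parameter $\zeta=\zeta(w)$, the coefficient transforms as $\widetilde Q(w)=Q(\zeta(w))\,\zeta'(w)^2$. Since $\zeta'$ is holomorphic and nonvanishing, we get $\widetilde Q_{\bar w}=Q_{\bar\zeta}\,\overline{\zeta'}\,\zeta'^2$. Hence $|\widetilde Q_{\bar w}|\le a|\zeta'|\,|\widetilde Q|$, and the inequality persists with a new locally bounded (continuous) coefficient. The statement "$Q$ vanishes at a point," the order of a zero, and its winding number are likewise chart-independent: $\zeta'^2$ is a continuous nonvanishing factor, so it changes neither the order of the holomorphic factor nor the winding number around a small circle. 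This chart-independence is the one point I would check carefully, since the winding number is a priori computed in a chart.

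Next, define $Z\subset\Sigma$ to be the set of points having a neighbourhood on which $Q\equiv0$. By definition $Z$ is open. To show it is closed, take $q\in\overline Z$ and a connected conformal chart disc $U$ around $q$. Then $U$ meets $Z$, so $Q$ vanishes on a nonempty open subset of $U$. By the last assertion of Lemma~\ref{lem:similarity} applied on the connected set $U$, it follows that $Q\equiv0$ on $U$, so $q\in Z$. Since $\Sigma$ is connected, either $Z=\Sigma$, giving $Q\equiv0$, or $Z=\emptyset$.

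In the case $Z=\emptyset$, $Q$ is not identically zero on any chart disc. Lemma~\ref{lem:similarity} then applies in each disc: the zeros of $Q$ there are isolated, each with positive integer order equal to the order of the holomorphic factor $F$ in the local factorization $Q=\Phi F$, and with winding number equal to that order. Isolatedness is local, so the zero set is discrete in $\Sigma$, and by the first paragraph the order and winding number are well defined independently of the chart.

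I expect the only real obstacle to be the bookkeeping behind "order": the factorization $Q=\Phi F$ is not unique. I would resolve this by defining the order as the winding number itself, which is intrinsic because it is a topological invariant of $Q$ on small circles and is chart-independent by the first paragraph. The lemma's conclusion that this winding number equals the multiplicity of $F$ then shows the order is a positive integer.
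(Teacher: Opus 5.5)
Your proposal is correct and follows essentially the same route as the paper. Both arguments use the clopen set of points near which $Q\equiv0$, prove closedness by the last assertion of Lemma~\ref{lem:similarity} on a connected chart, and get chart-independence of zeros, order and winding number from $\widetilde Q(w)=Q(\zeta(w))\,\zeta'(w)^2$. Your extra checks are harmless refinements that the paper leaves implicit: that the inequality transforms with coefficient $a|\zeta'|$, and that the order is well defined even though the factorization $Q=\Phi F$ is not unique.
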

\begin{proof}
By Lemma~\ref{lem:similarity}, on each chart $U$ either $Q\equiv0$ or the zeros of $Q$ in $U$ are isolated, of positive integer order, with winding number equal to that order. Let
\[
 Z^\circ:=\{p\in\Sigma:Q\equiv0\text{ on some open neighborhood of }p\}.
\]
$Z^\circ$ is open by construction. It is also closed: if $p\in\overline{Z^\circ}$, fix a connected chart $U\ni p$ meeting $Z^\circ$, so $Q\equiv0$ on a nonempty open subset of $U$; applying Lemma~\ref{lem:similarity} on $U$, the isolated-zeros alternative is incompatible with $Q$ vanishing on a nonempty open set, so $Q\equiv0$ on all of $U\subset Z^\circ$, and $p\in Z^\circ$.

As $\Sigma$ is connected and $Z^\circ$ is clopen, $Z^\circ\in\{\emptyset,\Sigma\}$. If $Z^\circ=\Sigma$, then $Q\equiv0$ on $\Sigma$. If $Z^\circ=\emptyset$, the isolated-zeros alternative holds on every chart; the notions of isolated zero, order, and winding number are local and unaffected by the transition between overlapping charts: if $Q=q(\zeta)\,d\zeta^2=\widetilde q(\omega)\,d\omega^2$ on an overlap, then $\widetilde q(\omega)=q(\zeta(\omega))\,(d\zeta/d\omega)^2$, and since $d\zeta/d\omega\ne0$ is holomorphic, this is a nonvanishing holomorphic multiple of $q\circ\zeta$, which changes neither the order of a zero nor its winding number. So the zero set of $Q$ on all of $\Sigma$ is discrete, of matching order and winding number.
\end{proof}

\begin{proof}[Proof of Theorem~\ref{thm:sphere}]
\emph{Step 1: a Cauchy--Riemann inequality for $Q_{\mathrm{AR}}$ itself.} The fundamental identity \eqref{eq:holoidentity}, $(\QAR)_{\bar\zeta}=2pH_{\bar\zeta}+\lambda(H+i\tau)H_\zeta$, holds for every smooth immersion, with no holomorphy hypothesis on $\calQAR$; this is exactly what lets the present proof bypass the local characterization theorem entirely. Write $\rho:=\sqrt{H^2+\tau^2}=|H+i\tau|$. In an isothermal parameter $\zeta=x+iy$, $|H_\zeta|=\tfrac12\sqrt\lambda\,|dH|_g$. By the triangle inequality,
\[
 |(\QAR)_{\bar\zeta}|\le2|p|\,|H_\zeta|+\lambda\rho\,|H_\zeta|=(2|p|+\lambda\rho)\frac{\sqrt\lambda}2\,|dH|_g.
\]
Substituting the hypothesis $|dH|_g\le h|\calQAR|_g=h|\QAR|/\lambda$,
\begin{equation}\label{eq:QARCRineq}
 |(\QAR)_{\bar\zeta}|\le\frac{h(2|p|+\lambda\rho)}{2\sqrt\lambda}\,|\QAR|=:a\,|\QAR|,
\end{equation}
with $a$ continuous on each chart.

\emph{Step 2: the dichotomy.} By \eqref{eq:QARCRineq}, $\QAR$ satisfies, on each chart, the hypothesis of Corollary~\ref{cor:dichotomy}, so either $\calQAR\equiv0$ on the connected $\Sigma$, or its zero set is discrete, with every zero of positive integer order and matching winding number.

\emph{Step 3: the second alternative is impossible on $S^2$.} Suppose, for contradiction, $\calQAR\not\equiv0$. On the complement of its zero set, the directions $v$ with $\QAR(v,v)$ real and positive define a smooth line field $\ell$, at angle $\theta\equiv-\tfrac12\arg\QAR\pmod\pi$ to the coordinate axis. Since $\Sigma\cong S^2$ is compact and the zeros of $\calQAR$ are isolated by Step~2, there are finitely many $q_1,\dots,q_r$ ($r\ge0$), of orders $m_1,\dots,m_r\ge1$.

We compute the index of $\ell$ at each $q_j$. In a conformal chart $\zeta$ centered at $q_j=0$, the local factorization of Lemma~\ref{lem:similarity} gives $\QAR=\Phi_jF_j$ near $q_j$, $F_j$ holomorphic with a zero of order $m_j$ at $0$, $\Phi_j$ continuous and nonvanishing; write $F_j(\zeta)=\zeta^{m_j}G_j(\zeta)$, $G_j(0)\ne0$. On a small circle, tracking a continuous branch of $\arg\QAR=\arg\Phi_j+m_j\arg\zeta+\arg G_j$, only the $\arg\zeta$ term contributes net change, $2\pi m_j$; so a continuously tracked branch of $\theta=-\tfrac12\arg\QAR\pmod\pi$ changes by $-\pi m_j$, and
\[
 \operatorname{ind}_{q_j}(\ell)=-\frac{m_j}2.
\]

The Poincar\'e--Hopf index formula for line fields with (possibly half-integer) indices on a closed surface (see \cite[\S~2--3]{Hopf1983}) gives
\[
 2=\chi(S^2)=\chi(\Sigma)=\sum_{j=1}^r\left(-\frac{m_j}2\right).
\]
(If $r=0$, $\ell$ is already nonsingular on all of $\Sigma$, and the classical Poincar\'e--Hopf formula gives $\chi(S^2)=0$ directly -- already a contradiction; the displayed identity extends this to $r\ge1$.) The right side is $\le0$ (empty if $r=0$, else a sum of strictly negative terms), contradicting $=2$. Hence $\calQAR\not\equiv0$ is untenable.

\emph{Step 4: conclusion.} By Steps~2--3, $\calQAR\equiv0$ on $\Sigma$, so $|dH|_g\le h\cdot0=0$: $dH\equiv0$, and $H$ is constant on connected $\Sigma$.

Every step above holds with no restriction on the sign of $k=\kappa-4\tau^2$, in particular verbatim at $k=0$ (the round sphere). No step used the local characterization theorem of \cite{AlencarRosenberg2026}, nor any part of its algebraic machinery, nor any computation imported from it: the fundamental identity \eqref{eq:holoidentity} was derived above directly from Daniel's structure equations, so the two theorems are logically independent, sharing only that identity as a common ancestor, each established on its own. Nor did any step use $\tau\ne0$: at $\tau=0$, on the products $M^2(\kappa)\times\R$, the identical argument recovers verbatim the original theorem of Alencar--do Carmo--Tribuzy \cite[Thm.~1.1]{ACT2007} for the classical Abresch--Rosenberg differential.
\end{proof}

\begin{observacao}[The structural obstruction]\label{rem:euler}
The mechanism of Step~3 shows more: on any closed, connected, oriented surface $\Sigma$ satisfying the same Cauchy--Riemann inequality $|dH|_g\le h|\calQAR|_g$, if $\calQAR\not\equiv0$ then the same index computation gives $\chi(\Sigma)=-\tfrac12\sum_jm_j\le0$; equivalently, $\sum_jm_j=-2\chi(\Sigma)=4g-4$ for a surface of genus $g$. So the obstruction actually forcing $\calQAR\equiv0$, and hence $H$ constant, is $\chi(\Sigma)>0$; for closed, connected, orientable surfaces this is equivalent to $\Sigma\cong S^2$, which is Theorem~\ref{thm:sphere} as stated, but the inequality $\chi(\Sigma)\le0$ is the form of the argument that generalizes. At $g=1$, since each $m_j\ge1$, the count $\sum_jm_j=0$ forces $r=0$: no zeros of $\calQAR$ at all, consistent with Proposition~\ref{prop:berger} below, where $\calQAR$ is in fact zero-free. At $g\ge2$, by contrast, $4g-4>0$, and a sum of $r\ge0$ terms each $\ge1$ can equal a strictly positive number only if $r\ge1$: whenever $\calQAR\not\equiv0$ on a surface of genus $g\ge2$, zeros necessarily exist, with $1\le r\le4g-4$ and their orders summing to exactly $4g-4$. This count, by itself, does not determine whether $H$ is constant: it is a necessary consequence of $\calQAR\not\equiv0$, not an obstruction to it. Proposition~\ref{prop:berger} shows the bound $\chi(\Sigma)\le0$ is already attained, with no contradiction, at $\chi(\Sigma)=0$ ($g=1$); it does not address, and should not be read as addressing, any individual case $g\ge2$.
\end{observacao}

\section{Sharpness: non-CMC Hopf tori on Berger spheres}\label{sec:sharpness}

\begin{proposition}[The inequality does not characterize CMC on complete tori]\label{prop:berger}
On every Berger sphere, including the round sphere, there exist smooth, compact, oriented, non-CMC tori for which
\[
 |dH|_g\le h_0|\calQAR|_g
\]
with a finite constant $h_0\ge0$. In particular, the inequality with continuous $h$ does not imply CMC on every complete surface with $\tau\ne0$.
\end{proposition}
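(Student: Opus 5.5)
The plan is to use Hopf tubes over simple closed curves in the base $S^2(\kappa)$ and read everything off Proposition~\ref{prop:hopf}. A Berger sphere (including the round one, $\kappa=4\tau^2$) is $\E$ with $\kappa>0$, $\tau\ne0$, fibering over $S^2(\kappa)$. Fix a smooth simple closed curve $\gamma\subset S^2(\kappa)$ whose geodesic curvature $k_g$ is nonconstant. A small smooth perturbation of a geodesic circle, e.g.\ a curve given in polar-type coordinates by $r(\theta)=r_0+\varepsilon\cos2\theta$ with $\varepsilon$ small, is simple and has nonconstant curvature. Its preimage $\Sigma_\gamma=\pi^{-1}(\gamma)$ is compact and embedded, since the fibers are circles and $\gamma$ is compact and simple. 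It is oriented by the normal $N$ lifting the normal of $\gamma$. By the remark after Proposition~\ref{prop:hopf} ($\det S\equiv-\tau^2<0$ with Proposition~\ref{prop:torus}), it is a torus. Since $2H=k_g\circ\pi$, the surface $\Sigma_\gamma$ is non-CMC.

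For the inequality I would work in the local flat coordinates of Proposition~\ref{prop:hopf}. There $\lambda\equiv1$, so $|\calQAR|_g=|\QAR|=H^2+\kappa/4\ge\kappa/4>0$. This uses $\kappa>0$ on a Berger sphere, and it holds in the round case as well, since $\kappa=4\tau^2>0$. Thus $|\calQAR|_g$ is bounded below by the positive constant $\kappa/4$ and never vanishes. On the other side, $|dH|_g$ is a globally defined continuous function on the compact surface $\Sigma_\gamma$, so it is bounded. Concretely, $T(H)=0$ and $E_1(H)=\tfrac12k_g'(s)$ with $s$ the arclength of $\gamma$, so $|dH|_g=\tfrac12|k_g'|\circ\pi\le\tfrac12\max|k_g'|$. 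Setting $h_0:=2\max|k_g'|/\kappa$ gives $|dH|_g\le h_0|\calQAR|_g$ everywhere. The key point is that $|\calQAR|_g$ is intrinsic, so the local flat charts patch together without ambiguity.

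The only genuinely delicate step is the construction in the first paragraph. We must check that the chosen $\gamma$ is simple, so that the tube is embedded and compact, and that $k_g$ is nonconstant; any non-circle simple closed curve works, since closed curves of constant curvature on $S^2$ are circles. We also need the identification $2H=k_g\circ\pi$ with consistent orientation along the whole tube, which is exactly Proposition~\ref{prop:hopf}. The final sentence of the statement then follows: these tori are complete, being compact, with $\tau\ne0$, and they satisfy the inequality with a continuous (indeed constant) $h$ while failing to be CMC.
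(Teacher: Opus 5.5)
Your proposal is correct and follows essentially the same route as the paper. Both take a Hopf tube over a simple closed curve in $S^2(\kappa)$ with nonconstant $k_g$, identify it as a torus via $\det S\equiv-\tau^2$ and Proposition~\ref{prop:torus}, and use $|\calQAR|_g=H^2+\kappa/4\ge\kappa/4$; your $h_0=2\max|k_g'|/\kappa$ is the paper's $\tfrac4\kappa\max|dH|_g$, since $|dH|_g=\tfrac12|k_g'|\circ\pi$.

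The only difference is how nonconstancy of $k_g$ is certified. The paper uses the latitude graph $\beta(\varphi)=\varepsilon\cos2\varphi$ and evaluates $k_g$ explicitly at two critical points. You instead appeal to the fact that simple closed curves of constant geodesic curvature on $S^2$ are circles. For your polar graph $r_0+\varepsilon\cos2\theta$ this still needs the short check that it is not a circle. One way: a circle avoiding the pole and invariant under the rotation by $\pi$ about the polar axis must be centered on that axis, which would force $r$ constant.
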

\begin{proof}
Every Berger sphere has base curvature $\kappa>0$ (round case included, $\kappa=4\tau^2>0$). We exhibit explicitly a smooth, embedded, simple closed curve $\gamma\subset S^2(\kappa)$ whose geodesic curvature $k_g$ is non-constant. Realize $S^2(\kappa)$ as the Euclidean sphere of radius $R=1/\sqrt\kappa$ in $\R^3$. Fix $0<\varepsilon<\pi/4$ and set $\beta(\varphi):=\varepsilon\cos(2\varphi)$, so that $|\beta(\varphi)|\le\varepsilon<\pi/4$ for all $\varphi$; define
\[
 \gamma(\varphi):=R\bigl(\cos\beta(\varphi)\cos\varphi,\ \cos\beta(\varphi)\sin\varphi,\ \sin\beta(\varphi)\bigr),\qquad 0\le\varphi\le2\pi.
\]
Since $\cos\beta(\varphi)>0$ throughout, the longitude $\varphi$ is recovered uniquely from $\gamma(\varphi)$ (as $\arg(\gamma_1+i\gamma_2)$), so $\gamma$ is simple and closed; a direct computation of $\gamma'$ shows it is nowhere zero, so $\gamma$ is smooth, regular, and embedded. A standard computation, in latitude-longitude coordinates $ds^2=R^2(d\beta^2+\cos^2\beta\,d\varphi^2)$ on $S^2(\kappa)$ for a curve given as a graph of latitude $\beta(\varphi)$ over longitude $\varphi$ (not a curve of revolution, which would instead be a fixed-latitude circle), gives, at any point where $\beta'(\varphi)=0$, the geodesic curvature
\[
 k_g(\varphi)=\frac{\beta''(\varphi)+\sin\beta(\varphi)\cos\beta(\varphi)}{R\cos^2\beta(\varphi)}.
\]
Here $\beta'(\varphi)=-2\varepsilon\sin(2\varphi)$ vanishes at $\varphi=0,\pi/2,\pi,3\pi/2$, with $\beta(0)=\varepsilon$, $\beta(\pi/2)=-\varepsilon$, and $\beta''(\varphi)=-4\varepsilon\cos(2\varphi)$, so $\beta''(0)=-4\varepsilon$, $\beta''(\pi/2)=4\varepsilon$. Hence
\[
\begin{gathered}
 k_g(0)=\frac{-4\varepsilon+\sin\varepsilon\cos\varepsilon}{R\cos^2\varepsilon}=\frac{-4\varepsilon+\tfrac12\sin(2\varepsilon)}{R\cos^2\varepsilon},\\
 k_g(\pi/2)=\frac{4\varepsilon-\tfrac12\sin(2\varepsilon)}{R\cos^2\varepsilon}=-k_g(0).
\end{gathered}
\]
Since $\sin(2\varepsilon)<2\varepsilon$ for every $\varepsilon>0$, the numerator $-4\varepsilon+\tfrac12\sin(2\varepsilon)$ is strictly negative, so $k_g(0)<0<k_g(\pi/2)=-k_g(0)$: in particular $k_g(0)\ne k_g(\pi/2)$, and $k_g$ is non-constant on $\gamma$.

Let $\Sigma:=\Sigma_\gamma=\pi^{-1}(\gamma)$ be the Hopf tube over $\gamma$ (\S\ref{sec:hopftubes}). Since $\gamma$ is smooth, simple, and closed on compact $S^2(\kappa)$, $\pi|_{\Sigma_\gamma}$ is a smooth fiber bundle over $\gamma\cong S^1$ with compact fiber $S^1$, so $\Sigma_\gamma$ is a smooth, compact, embedded surface without boundary, oriented as in Proposition~\ref{prop:hopf}. By that proposition, $\det S\equiv-\tau^2<0$ on $\Sigma_\gamma$ (as $\tau\ne0$), so Proposition~\ref{prop:torus} identifies $\Sigma_\gamma$ as a torus.

Again by Proposition~\ref{prop:hopf}, $2H=k_g$ is non-constant, so $\Sigma_\gamma$ is not CMC, and $\QAR=H^2+\kappa/4$ in the parallel-frame coordinates where $\lambda\equiv1$, so
\[
 |\calQAR|_g=\frac{|\QAR|}\lambda=H^2+\frac\kappa4\ge\frac\kappa4>0
\]
identically on $\Sigma_\gamma$. As $\Sigma_\gamma$ is compact and $H$ is smooth, $|dH|_g$ attains a finite maximum. Set $h_0:=\tfrac4\kappa\max_{\Sigma_\gamma}|dH|_g<\infty$. Then, at every point,
\[
 h_0\,|\calQAR|_g\ge h_0\cdot\frac\kappa4=\max_{\Sigma_\gamma}|dH|_g\ge|dH|_g,
\]
which is the claimed inequality.

Compactness of $\Sigma_\gamma$ gives completeness, so this is a \emph{complete} (indeed compact) counterexample to any na\"ive strengthening of Theorem~\ref{thm:sphere} to genus one. There is no conflict with the local characterization theorem of \cite{AlencarRosenberg2026}: by Proposition~\ref{prop:hopf}, holomorphy of $\calQAR$ on a connected Hopf tube is equivalent to $H$ constant, and $H$ is non-constant here by construction, so genuine holomorphy fails on $\Sigma_\gamma$; only the weaker inequality holds. Nor is there conflict with Theorem~\ref{thm:sphere}, which assumes $\Sigma$ topologically $S^2$: $\Sigma_\gamma$ is a torus, and Proposition~\ref{prop:torus} shows this is forced for every closed Hopf tube over a sphere-fibered base, never $S^2$.
\end{proof}

Together, the genus-zero result of Theorem~\ref{thm:sphere} and the genus-one examples of Proposition~\ref{prop:berger} show that the Cauchy--Riemann inequality, on its own, does not characterize CMC immersions on any class of closed surfaces broad enough to include the torus: it suffices on the sphere, and already fails, with room to spare (a \emph{constant} $h_0$), on the torus. This does not settle the question on any restricted class of surfaces of genus $g\ge2$ individually; see Remark~\ref{rem:euler}, where the same index count instead forces zeros of $\calQAR$ to exist whenever it is not identically zero.

\begin{observacao}[Where the hypothesis has content]\label{rem:nearzeros}
On any compact $\Sigma$ on which $\calQAR$ has no zeros, a continuous bound $h$ as in Theorem~\ref{thm:sphere} exists automatically, taking $h:=\sup_\Sigma|dH|_g/|\calQAR|_g<\infty$; this is exactly the mechanism behind Proposition~\ref{prop:berger}. So the Cauchy--Riemann inequality, as a hypothesis, has geometric content only \emph{near} the zeros of $\calQAR$, precisely where the index computation of Step~3 in the proof of Theorem~\ref{thm:sphere} does its work. Any extension of Theorem~\ref{thm:sphere} to genus one must impose additional restrictions that exclude the zero-free non-CMC examples of Proposition~\ref{prop:berger}; the existence of a continuous bound $h$ alone is insufficient, since that is already satisfied there. At genus $g\ge2$ the mechanism is different: by Remark~\ref{rem:euler}, $\calQAR\not\equiv0$ there already forces zeros to exist, so a zero-free counterexample of the present kind is not available, and this article does not decide the question in that range.
\end{observacao}

\section*{Acknowledgements}

H.~Alencar was partially supported by the Brazilian National Council for Scientific and Technological Development -- CNPq, grant 303118/2022-9.

\end{document}